\newtheorem{theorem}{Theorem}
\newtheorem{proposition}[theorem]{Proposition}
\newtheorem{lemma}[theorem]{Lemma}
\newtheorem{example}[theorem]{Example}
\newtheorem{remark}[theorem]{Remark}
\newtheorem{problem}[theorem]{Problem}
\begin{document}

\title{Conjugations on the Hardy space $H^{2}$}

\author[Marcos S. Ferreira]{Marcos S. Ferreira}

\address{%
Departamento de Ciências Exatas e Tecnológicas\\
Universidade Estadual de Santa Cruz\\
Ilhéus, Bahia, Brazil}
\email{msferreira@uesc.br}

\author[Geraldo de A. Júnior]{Geraldo de A. Júnior}

\address{%
Departamento de Ciências Exatas e Tecnológicas\\
Universidade Estadual de Santa Cruz\\
Ilhéus, Bahia, Brazil}
\email{gajunior@uesc.br}

\subjclass[2010]{Primary 46E20, 47B32;
Secondary 47B35}

\keywords{Conjugations, Hardy space, complex symmetric operator, Toeplitz operator.}

\date{Dec 1, 2021}

\begin{abstract}
A conjugation $C$ on a separable complex Hilbert space $\mathcal H$ is an antilinear operator that is isometric and involutive. In this notes, we characterize all conjugations on the Hardy-Hilbert space $H^{2}$ over the disk. In addition, we characterize complex symmetric Toeplitz operators with a special type of these conjugations.
\end{abstract}

\maketitle

\section{Introduction}

A \textsl{conjugation} $C$ on a separable complex Hilbert space $\mathcal H$ is an antilinear operator $C:\mathcal H\rightarrow\mathcal H$ such that:
\begin{enumerate}
  \item [(a)] $C$ is \emph{isometric}: $\left\langle Cf,Cg\right\rangle=\left\langle g,f\right\rangle$, $\forall f,g\in\mathcal H$.
  \item [(b)] $C$ is \emph{involutive}: $C^{2}=I$.
\end{enumerate}

Conjugations have been widely studied recently and their roots are related to some fields of physics, especially quantum mechanics. The main motivation for the study of conjugations is the study of complex symmetric operators.

A bounded linear operator $T\in\mathcal{L}(\mathcal{H})$ is said to be \emph{complex symmetric} is there exists a conjugation $C$ on $\mathcal{H}$ such that $CT=T^{*}C$. In this case we say that $T$ is an operator \emph{$C$-symmetric}. The concept of complex symmetric operators on separable Hilbert spaces is a natural generalization of complex symmetric matrices, and their general study was initiated by Garcia, Putinar, and Wogen \cite{Garcia,Garcia2,Garcia3,Garcia4}. The class of complex symmetric operators includes other basic classes of operators such as normal, Hankel, compressed Toeplitz, and some Volterra operators.

Let $\mathbb{T}$ be the boundary of the open unit disk $\mathbb{D}$ in the complex plane $\mathbb{C}$. We let $L^{2}=L^{2}(\mathbb{T},\sigma)$ be usual Lebesgue space on $\mathbb{T}$ where $\sigma$ is the normalized Haar measure on $\mathbb{T}$. The \emph{Hardy space} $H^{2}$ consists of the all holomorphic functions $f$ on the unit disk $\mathbb{D}$ such that
$$
\sup_{0<r<1}\int_{\mathbb{T}}|f(r\zeta)|^{2}d\sigma(\zeta)<\infty.
$$

As is well known, the Hardy space $H^{2}$ is isometrically identified with a closed subspace of $L^{2}$ via the boundary functions. Indeed, if $f\in H^{2}$, considering the function $f_{r}$ given by $f_{r}(\zeta)=f(r\zeta)$, we have the radial limit
$$
f^{*}(\zeta)=\lim_{r\rightarrow1}f_{r}(\zeta)
$$
there exists for almost every $\zeta\in\mathbb{T}$ and holds
$$
\lim_{r\rightarrow1}\int_{\mathbb{T}}|f_{r}-f^{*}|^{2}d\sigma(\zeta)=0.
$$

Since $\left\{e_{n}(e^{i\theta})=e^{in\theta}: \ n\in\mathbb{Z}\right\}$ is an orthonormal basis for $L^{2}$, we have that $\left\{z^{n}:n=0,1,2,\ldots\right\}$ is an orthonormal basis for $H^{2}$ and therefore $f\in H^{2}$ if and only if
$$
f(z)=\sum_{n=0}^{\infty}a_{n}z^{n} \ \text{where} \ \sum_{n=0}^{\infty}|a_{n}|^{2}<\infty.
$$

For each $\phi \in L^{\infty }$, the \textit{Toeplitz operator} $T_{\phi}:H^{2}\rightarrow H^{2}$, with symbol $\phi$, is defined by%
\begin{equation*}
T_{\phi }f=P\left( \phi f\right),
\end{equation*}%
for all $f\in H^{2},$ where $P:L^{2}\rightarrow H^{2}$ is the orthogonal projection.

The concept of Toeplitz operators generalizes the concept of Toeplitz matrices and their general algebraic properties were studied by Brown and Halmos first addressed by \cite{Brown}. One of the first examples of complex symmetric Toeplitz operator is due to Guo and Zhu \cite{Guo}. In this work, Guo and Zhu raised the question of characterizing complex symmetric Toeplitz operators on the Hardy space $H^{2}$.

The most natural conjugation in $H^{2}$ is $\mathcal{J}$ given by
$$
\mathcal{J}f(z)=\overline{f(\overline{z})},
$$
or in general $C_{\lambda}f(z)=\overline{f(\lambda\overline{z})}$, with $\lambda\in\mathbb{T}$ (see \cite[Lemma 2.3]{Ko}). Thus, if we write $f(z)=\sum_{n=0}^{\infty}a_{n}z^{n}\in H^{2}$ we have to
\begin{eqnarray}\label{eq1}
C_{\lambda}f(z)=\sum_{n=0}^{\infty}\overline{a_{n}}\overline{\lambda^{n}}z^{n}.
\end{eqnarray}

Recently, a more general class of conjugations has been introduced in \cite{Li}. Li, Yang and Lu considered sequences $\alpha=\left\{\alpha_{0},\alpha_{1},\alpha_{2},\cdots\right\}$ where $\alpha_{m}\in\mathbb{T}$ and showed that $C_{\alpha}:H^{2}\rightarrow H^{2}$ defined by
\begin{eqnarray}\label{eq2}
C_{\alpha}\left(\sum_{n=0}^{\infty}a_{n}z^{n}\right)=\sum_{n=0}^{\infty}\overline{a_{n}}\alpha_{n}z^{n}
\end{eqnarray}
is a conjugation on $H^{2}$. Naturally, if $\alpha_{m}=\overline{\lambda^{m}}$ for each $m=0,1,2,\cdots$ then $C_{\alpha}=C_{\lambda}$.

The purpose of this paper is to use the fact that every conjugation $C$ on $H^{2}$ is of type $C=U^{*}\mathcal{J}U$, where $U$ is an unitary operator, and to find other conjugations on $H^{2}$ that generalize \eqref{eq2}. As a consequence, we guarantee a characterization for complex symmetric Toeplitz operators.

\section{Main results}

In \cite{Ferreira} Ferreira showed the important role that conjugation $\mathcal{J}f(z)=\overline{f(\overline{z})}$ plays in the study of conjugations on $H^{2}$, namely:

\begin{theorem}\label{teo1}
C is a conjugation on $H^{2}$ if, and only if, there exists an unitary operator $U:H^{2}\rightarrow H^{2}$ such that $C=U^{*}\mathcal{J}U$.
\end{theorem}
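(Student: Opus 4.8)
The theorem is an "if and only if" statement. The converse direction (sufficiency) should be easy: if $C = U^* \mathcal{J} U$ with $U$ unitary, then $C$ is a composition of antilinear/linear isometries, hence isometric (antilinear), and $C^2 = U^* \mathcal{J} U U^* \mathcal{J} U = U^* \mathcal{J}^2 U = U^* U = I$, so $C$ is a conjugation. That's routine — I'd dispatch it in two lines, using that $\mathcal{J}$ itself is a conjugation (which follows from formula \eqref{eq1} with $\lambda = 1$: $\mathcal{J}$ sends $\sum a_n z^n$ to $\sum \overline{a_n} z^n$, manifestly isometric-antilinear and involutive).

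The forward direction is the substance. The plan is to start from an arbitrary conjugation $C$ on $H^2$ and manufacture a unitary $U$ with $C = U^*\mathcal{J}U$, equivalently $UC = \mathcal{J}U$, i.e. $U$ must intertwine $C$ with $\mathcal{J}$. The standard device here is to find an orthonormal basis that $C$ fixes pointwise. I recall the general fact (Garcia–Putinar) that every conjugation on a separable Hilbert space admits a $C$-real orthonormal basis $\{f_n\}$, meaning $Cf_n = f_n$ for all $n$; one builds it by a Gram–Schmidt-type argument on vectors of the form $x + Cx$, using that $\langle Cx, x\rangle$-type quantities behave well. Granting such a basis $\{f_n\}_{n\ge 0}$ for $H^2$, I would then define $U: H^2 \to H^2$ by $U f_n = z^n$ (sending the $C$-real basis to the standard monomial basis, which is the $\mathcal{J}$-real basis since $\mathcal{J} z^n = z^n$), extended linearly. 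Since $U$ carries one orthonormal basis to another, it is unitary.

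It then remains to verify $UC = \mathcal{J}U$ on all of $H^2$; by antilinearity of both sides and continuity it suffices to check on the basis $\{f_n\}$. For $f = \sum a_n f_n$ we have $Cf = \sum \overline{a_n} f_n$ (using $C$-antilinearity and $Cf_n = f_n$), so $UCf = \sum \overline{a_n} z^n$; on the other hand $Uf = \sum a_n z^n$, hence $\mathcal{J}Uf = \sum \overline{a_n} z^n$. These agree, so $UC = \mathcal{J}U$, giving $C = U^*\mathcal{J}U$.

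The one genuine obstacle is the existence of a $C$-real orthonormal basis — everything else is bookkeeping. If the paper wants to be self-contained here, I would include the construction: take any orthonormal basis $\{e_n\}$, and inductively orthonormalize the vectors $e_n + C e_n$ (discarding zeros and appending $i(e_n - Ce_n)$ when needed to handle the degenerate case $Ce_n = -e_n$), checking at each stage that the resulting vectors are $C$-fixed — this uses only the isometric and involutive properties of $C$ together with the identity $C(\lambda v) = \bar\lambda\, Cv$. Alternatively, I would simply cite \cite{Garcia2} (or the Garcia–Putinar survey) for the existence of $C$-real orthonormal bases and keep the proof short. I expect the author takes the latter route, since Theorem \ref{teo1} is attributed to \cite{Ferreira} and presented as a known tool.
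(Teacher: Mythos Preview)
Your argument is correct in both directions: the sufficiency computation $C^{2}=U^{*}\mathcal{J}UU^{*}\mathcal{J}U=U^{*}U=I$ together with the isometry check is valid, and for the necessity direction the use of a $C$-real orthonormal basis $\{f_{n}\}$ with $Cf_{n}=f_{n}$ (the Garcia--Putinar lemma) followed by defining $U$ via $Uf_{n}=z^{n}$ yields exactly $UC=\mathcal{J}U$, as you verify. Your anticipation of the paper's treatment is also accurate: the paper supplies no proof at all here, but simply writes ``See Theorem~2.1 and Proposition~2.3 in \cite{Ferreira},'' so Theorem~\ref{teo1} functions purely as an imported tool. There is thus no substantive comparison to make---your proposal fills in what the paper deliberately outsources, and does so by the standard route.
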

\begin{proof}
See Theorem 2.1 and Proposition 2.3 in \cite{Ferreira}.
\end{proof}

The previous theorem gives us a method of finding conjugations over $H^{2}$. Some are well known, such as \eqref{eq2}. The proof of the next lemma is left to the reader.

\begin{lemma}\label{lem1}
If $\left\{\zeta_{1},\zeta_{2},\ldots\right\}$ is a sequence where $\zeta_{j}\in\mathbb{T}$, then $\left\{1,\zeta_{1}z,\zeta_{2}^{2}z^{2},\ldots\right\}$ is an orthonormal basis for $H^{2}$.
\end{lemma}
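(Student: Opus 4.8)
The plan is to verify the two defining properties of an orthonormal basis for the system $\{\zeta_0^0 z^0, \zeta_1 z, \zeta_2^2 z^2, \ldots\} = \{\zeta_n^n z^n\}_{n\geq 0}$, where I set $\zeta_0 = 1$ to match the indexing (the constant function $1$ being $\zeta_0^0 z^0$). First I would check orthonormality: since each $\zeta_j \in \mathbb{T}$, we have $|\zeta_j^j| = 1$, so $\langle \zeta_m^m z^m, \zeta_n^n z^n\rangle = \zeta_m^m \overline{\zeta_n^n} \langle z^m, z^n\rangle = \zeta_m^m \overline{\zeta_n^n}\,\delta_{mn}$, which equals $1$ when $m = n$ and $0$ otherwise, because $\{z^n\}_{n\geq 0}$ is itself an orthonormal basis for $H^2$. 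So the new system is orthonormal.

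Next I would establish completeness. The cleanest route is to observe that the diagonal operator $U$ defined on the orthonormal basis $\{z^n\}$ by $U z^n = \zeta_n^n z^n$ extends to a bounded linear operator on $H^2$ — indeed to a unitary operator, since it maps an orthonormal basis to an orthonormal set and its inverse is the diagonal operator with entries $\overline{\zeta_n^n}$. A unitary image of an orthonormal basis is again an orthonormal basis; hence $\{U z^n\}_{n\geq 0} = \{\zeta_n^n z^n\}_{n\geq 0}$ is an orthonormal basis for $H^2$. Alternatively, without invoking $U$ explicitly, one can argue directly: if $f \in H^2$ is orthogonal to every $\zeta_n^n z^n$, then $0 = \langle f, \zeta_n^n z^n\rangle = \overline{\zeta_n^n}\langle f, z^n\rangle$, and since $\overline{\zeta_n^n} \neq 0$ this forces $\langle f, z^n\rangle = 0$ for all $n$, whence $f = 0$ by completeness of $\{z^n\}$.

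There is essentially no obstacle here; the only mild subtlety is bookkeeping on the indices — making sure the stated list $\{1, \zeta_1 z, \zeta_2^2 z^2, \ldots\}$ is read as $\{\zeta_n^n z^n : n = 0, 1, 2, \ldots\}$ with the convention $\zeta_0^0 = 1$ — and noting that the result holds verbatim for any modulus-one scalars multiplying the $z^n$, the specific choice $\zeta_n^n$ being immaterial to the argument. This lemma is exactly the kind of statement where the underlying content is that multiplying basis vectors by unimodular constants is a unitary operation, and it is this observation, combined with Theorem~\ref{teo1}, that will later produce new conjugations on $H^2$.
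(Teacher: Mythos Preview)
Your argument is correct and complete; both the orthonormality computation and either of your two completeness arguments are sound. The paper itself does not supply a proof of this lemma (it explicitly leaves it to the reader), so there is nothing to compare against, and your write-up is exactly the kind of routine verification that is expected here.
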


\begin{proposition}\label{cor1}
If $\zeta=\left\{\zeta_{1},\zeta_{2},\ldots\right\}$ is a sequence where $\zeta_{j}\in\mathbb{T}$, then $\mathcal{C}_{\zeta}:H^{2}\rightarrow H^{2}$ defined by
\begin{eqnarray}\label{eq3}
\mathcal{C}_{\zeta}\left(\sum_{n=0}^{\infty}a_{n}z^{n}\right)=\sum_{n=0}^{\infty}\overline{a_{n}}\overline{\zeta_{n}^{2n}}z^{n}
\end{eqnarray}
is a conjugation on $H^{2}$.
\end{proposition}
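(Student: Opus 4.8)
The plan is to produce a unitary operator $U\in\mathcal{L}(H^{2})$ with $U^{*}\mathcal{J}U=\mathcal{C}_{\zeta}$ and then appeal to Theorem \ref{teo1}. Since each $\zeta_{j}\in\mathbb{T}$, the diagonal operator $U$ defined on the orthonormal basis $\{z^{n}:n\geq 0\}$ by $Uz^{n}=\zeta_{n}^{n}z^{n}$ (reading the $n=0$ entry as $\zeta_{0}^{0}=1$, which is all that is needed since $\zeta_{0}$ plays no role) carries $\{z^{n}\}$ onto the system $\{1,\zeta_{1}z,\zeta_{2}^{2}z^{2},\ldots\}$, which is an orthonormal basis of $H^{2}$ by Lemma \ref{lem1}. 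Hence $U$ is unitary, and its adjoint is the diagonal operator $U^{*}z^{n}=\overline{\zeta_{n}^{n}}\,z^{n}$.

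Next I would evaluate $U^{*}\mathcal{J}U$ on an arbitrary $f=\sum_{n=0}^{\infty}a_{n}z^{n}\in H^{2}$. Recalling that $\mathcal{J}$ acts on power series by $\mathcal{J}\bigl(\sum_{n}c_{n}z^{n}\bigr)=\sum_{n}\overline{c_{n}}z^{n}$, one gets $Uf=\sum_{n}a_{n}\zeta_{n}^{n}z^{n}$, then $\mathcal{J}Uf=\sum_{n}\overline{a_{n}}\,\overline{\zeta_{n}^{n}}\,z^{n}$, and finally $U^{*}\mathcal{J}Uf=\sum_{n}\overline{a_{n}}\,\overline{\zeta_{n}^{n}}\cdot\overline{\zeta_{n}^{n}}\,z^{n}=\sum_{n}\overline{a_{n}}\,\overline{\zeta_{n}^{2n}}\,z^{n}$, which is exactly $\mathcal{C}_{\zeta}f$ as defined in \eqref{eq3}. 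Therefore $\mathcal{C}_{\zeta}=U^{*}\mathcal{J}U$, and Theorem \ref{teo1} immediately yields that $\mathcal{C}_{\zeta}$ is a conjugation on $H^{2}$.

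There is no genuine difficulty in this argument; the only step that demands attention is the choice of the unitary, namely taking $Uz^{n}=\zeta_{n}^{n}z^{n}$ rather than $\overline{\zeta_{n}}^{n}z^{n}$, so that the factor $\overline{\zeta_{n}^{n}}$ picked up from $\mathcal{J}$ and the factor $\overline{\zeta_{n}^{n}}$ picked up from $U^{*}$ combine to the prescribed $\overline{\zeta_{n}^{2n}}$. If one prefers to avoid Theorem \ref{teo1}, the same conclusion follows from a direct check: $\mathcal{C}_{\zeta}$ is visibly antilinear, a term-by-term computation using $|\zeta_{n}^{2n}|=1$ gives $\langle\mathcal{C}_{\zeta}f,\mathcal{C}_{\zeta}g\rangle=\langle g,f\rangle$, and $\mathcal{C}_{\zeta}^{2}=I$ because $\overline{\zeta_{n}^{2n}}\cdot\overline{\overline{\zeta_{n}^{2n}}}=|\zeta_{n}^{2n}|^{2}=1$.
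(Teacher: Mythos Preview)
Your argument is correct and essentially mirrors the paper's own proof: the paper likewise defines the diagonal unitary $Uz^{n}=(\zeta_{n}z)^{n}=\zeta_{n}^{n}z^{n}$, invokes Theorem \ref{teo1}, and verifies $U^{*}\mathcal{J}U=\mathcal{C}_{\zeta}$ by the same term-by-term computation. The only extras in your write-up are the explicit appeal to Lemma \ref{lem1} for unitarity and the optional direct check of the conjugation axioms, both of which are fine but not needed beyond what the paper does.
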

\begin{proof}
Consider the unitary operator $U$ on $H^{2}$ given by $Uz^{n}=(\zeta_{n}z)^{n}$, with $n=0,1,2,\ldots$. We have from Theorem \ref{teo1} that $U^{*}\mathcal{J}U$ is a conjugation for $H^{2}$. We claim that $\mathcal{C}_{\zeta}=U^{*}\mathcal{J}U$. In fact, if $f(z)=\sum_{n=0}^{\infty}a_{n}z^{n}\in H^{2}$ then
\begin{eqnarray*}
  U^{*}\mathcal{J}Uf(z) &=& U^{*}\mathcal{J}(\sum_{n=0}^{\infty}a_{n}\zeta_{n}^{n}z^{n}) \\
   &=& U^{*}(\sum_{n=0}^{\infty}\overline{a_{n}}\overline{\zeta_{n}^{n}}z^{n}) \\
   &=& U^{*}(\sum_{n=0}^{\infty}\overline{a_{n}}\overline{\zeta_{n}^{2n}}(\zeta_{n}^{n}z^{n})) \\
   &=& \sum_{n=0}^{\infty}\overline{a_{n}}\overline{\zeta_{n}^{2n}}z^{n},
\end{eqnarray*}
as wished.
\end{proof}

Of course we can get \eqref{eq1} and \eqref{eq2} from \eqref{eq3}:

\begin{example}\label{exa1}
Let $e^{i\theta/2}\in\mathbb{T}$ and consider $\zeta=\left\{e^{i\theta/2},e^{i\theta/2},e^{i\theta/2},\ldots\right\}$. By Proposition \ref{cor1} we have $\mathcal{C}_{\zeta}$ is a conjugation on $H^{2}$ where
$$
\mathcal{C}_{\zeta}\left(\sum_{n=0}^{\infty}a_{n}z^{n}\right)=\sum_{n=0}^{\infty}\overline{a_{n}}\overline{(e^{i\theta/2})^{2n}}z^{n}=\sum_{n=0}^{\infty}\overline{a_{n}}\overline{(e^{i\theta})^{n}}z^{n}
$$
that is, $\mathcal{C}_{\zeta}=C_{e^{i\theta}}$.
\end{example}

\begin{example}\label{exa2}
Let $\zeta=\left\{\overline{e^{i\theta_{1}/2}},\overline{e^{i\theta_{2}/4}},\cdots,\overline{e^{i\theta_{n}/2n}},\cdots\right\}$. By Proposition \ref{cor1} $C_{\zeta}$ is given by
$$
\mathcal{C}_{\zeta}\left(\sum_{n=0}^{\infty}a_{n}z^{n}\right)=\sum_{n=0}^{\infty}\overline{a_{n}}\overline{\overline{(e^{i\theta_{n}/2n})}^{2n}}z^{n}=\sum_{n=0}^{\infty}\overline{a_{n}}e^{i\theta_{n}}z^{n}
$$
that is, $\mathcal{C}_{\zeta}$ is the conjugation \eqref{eq2}.
\end{example}

The standard in the coefficients of conjugations \eqref{eq1}, \eqref{eq2} and \eqref{eq3} is generally repeated. In fact, by Theorem \ref{teo1} all conjugations on $H^{2}$ are of the type $C=U^{*}\mathcal{J}U$ with $U$ unitary. Thus $C$ can be seen to be $A\mathcal{J}$ where $A=U^{*}\mathcal{J}U\mathcal{J}$ is an $\mathcal{J}$-symmetric unitary operator (see also \cite[Lemma 3.2]{Garcia5}). We have the following:

\begin{proposition}\label{cor2}
If $C$ is a conjugation on $H^{2}$, then
\begin{eqnarray}\label{eq4}
C\left(\sum_{n=0}^{\infty}a_{n}z^{n}\right)=\sum_{n=0}^{\infty}\sum_{m=0}^{\infty}\overline{a_{n}}b_{m}^{(n)}z^{m},
\end{eqnarray}
where $\left\{b_{m}^{(n)}\right\}_{m=0}^{\infty}$ it is a summable square sequence.
\end{proposition}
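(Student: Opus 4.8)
The plan is to use Theorem~\ref{teo1}, which tells us that any conjugation $C$ on $H^{2}$ has the form $C = U^{*}\mathcal{J}U$ for some unitary $U$, and to simply expand this expression coefficient-by-coefficient against the orthonormal basis $\{z^{n}\}_{n\geq 0}$. Concretely, I would first record that $U^{*}\mathcal{J}U$ is antilinear, so it suffices to understand its action on the basis vectors $z^{n}$ and extend by antilinearity and continuity. Writing $Uz^{n} = \sum_{k=0}^{\infty} u_{kn} z^{k}$ (the matrix entries of $U$), we have $\mathcal{J}Uz^{n} = \sum_{k=0}^{\infty}\overline{u_{kn}}z^{k}$ since $\mathcal{J}$ conjugates Taylor coefficients, and then applying $U^{*}$ gives $Cz^{n} = U^{*}\mathcal{J}Uz^{n} = \sum_{m=0}^{\infty} b_{m}^{(n)} z^{m}$, where $b_{m}^{(n)} = \langle U^{*}\mathcal{J}Uz^{n}, z^{m}\rangle = \langle \mathcal{J}Uz^{n}, Uz^{m}\rangle$. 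Since $Cz^{n}\in H^{2}$, the sequence $\{b_{m}^{(n)}\}_{m=0}^{\infty}$ is automatically square-summable (indeed $\sum_{m}|b_{m}^{(n)}|^{2} = \|Cz^{n}\|^{2} = \|z^{n}\|^{2} = 1$ because $C$ is isometric).

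Next I would extend this to a general $f = \sum_{n=0}^{\infty} a_{n} z^{n} \in H^{2}$. Antilinearity gives $C\left(\sum_{n=0}^{N} a_{n}z^{n}\right) = \sum_{n=0}^{N}\overline{a_{n}}\, Cz^{n} = \sum_{n=0}^{N}\sum_{m=0}^{\infty}\overline{a_{n}}\, b_{m}^{(n)} z^{m}$ for every finite $N$, and then passing to the limit $N\to\infty$ using the fact that $C$ (being isometric, hence bounded as a real-linear map) is continuous, we obtain \eqref{eq4}. The only point requiring a word of care is that the double sum on the right-hand side genuinely converges in $H^{2}$ and can be rearranged into the stated form; this follows because the partial sums $C\left(\sum_{n=0}^{N} a_{n}z^{n}\right)$ form a Cauchy sequence converging to $Cf$, and each such partial sum is exactly the truncation of the double series at level $N$ in the index $n$.

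The main (and essentially only) obstacle is bookkeeping: being careful about which index ranges over the input coefficients and which over the output coefficients, and ensuring the interchange of summation implicit in writing $C\left(\sum_{n}a_{n}z^{n}\right) = \sum_{n}\sum_{m}\overline{a_{n}}b_{m}^{(n)}z^{m}$ is legitimate. Since everything happens inside the Hilbert space $H^{2}$ and $C$ is bounded and antilinear, there is no analytic subtlety beyond continuity; the square-summability of $\{b_{m}^{(n)}\}_{m}$ for each fixed $n$ is immediate from $b_{m}^{(n)}$ being the Fourier coefficients of the $H^{2}$ function $Cz^{n}$. I would therefore present the argument compactly: define $b_{m}^{(n)} := \langle Cz^{n}, z^{m}\rangle$, observe $\sum_{m}|b_{m}^{(n)}|^{2} = \|Cz^{n}\|^{2} = 1 < \infty$, and then expand $Cf$ by antilinearity and continuity of $C$ to reach \eqref{eq4}. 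Note that Theorem~\ref{teo1} is invoked only to know such a $C$ exists in the stated generality; in fact the expansion \eqref{eq4} holds for \emph{any} bounded antilinear operator on $H^{2}$, and isometry is used solely to get the clean normalization $\sum_{m}|b_{m}^{(n)}|^{2}=1$.
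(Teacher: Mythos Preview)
Your proposal is correct and follows essentially the same route as the paper: the paper factors $C=A\mathcal{J}$ with $A=U^{*}\mathcal{J}U\mathcal{J}$ a (linear) $\mathcal{J}$-symmetric unitary, sets $b_{m}^{(n)}$ to be the Fourier coefficients of $Az^{n}$ (equivalently of $Cz^{n}$, since $\mathcal{J}z^{n}=z^{n}$), and then expands $Cf=A(\mathcal{J}f)$ by linearity of $A$---which is exactly your antilinear expansion repackaged. Your additional remarks on continuity and on the fact that Theorem~\ref{teo1} is not strictly needed are correct refinements the paper leaves implicit.
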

\begin{proof}
Let $C=A\mathcal{J}$, where $A$ is an $\mathcal{J}$-symmetric unitary operator. Thus, for all nonnegative integer $n$ we have
$$
A(z^{n})=\sum_{m=0}^{\infty}b_{m}^{(n)}z^{m}
$$
therefore
$$
C\left(\sum_{n=0}^{\infty}a_{n}z^{n}\right)=A\left(\sum_{n=0}^{\infty}\overline{a_{n}}z^{n}\right)=\sum_{n=0}^{\infty}\sum_{m=0}^{\infty}\overline{a_{n}}b_{m}^{(n)}z^{m},
$$
as wished.
\end{proof}

A natural question is: if $C$ is $C_{\lambda}$ or $C_{\alpha}$, which respective $b_{m}^{(n)}$ coefficients satisfy \eqref{eq4} in Proposition \ref{cor2}:

\begin{remark}
Let $C=C_{e^{i\theta}}$. By propositions \ref{cor1} and \ref{cor2} we have to $C_{\lambda}=A\mathcal{J}$, where $A=U^{*}\mathcal{J}U\mathcal{J}$ and $Uz^{n}=(e^{i\theta/2}z)^{n}$ for $n=0,1,2,\ldots$. Thus
$$
Az^{n}=U^{*}\mathcal{J}Uz^{n}=U^{*}\mathcal{J}(e^{i\theta/2}z)^{n}=e^{-in\theta/2}U^{*}z^{n}=\overline{(e^{i\theta})^{n}}z^{n}
$$
since $U^{*}z^{n}=e^{-in\theta/2}z^{n}$ and therefore
$$
\sum_{m=0}^{\infty}b_{m}^{(n)}z^{m}=\overline{(e^{i\theta})^{n}}z^{n},
$$
as wished. Analogously if $C=C_{\zeta}$, with $\zeta=\left\{\overline{e^{i\theta_{1}/2}},\overline{e^{i\theta_{2}/4}},\cdots,\overline{e^{i\theta_{n}/2n}},\cdots\right\}$, we have $Uz^{n}=(\overline{e^{i\theta_{n}/2n}}z)^{n}$ for $n=0,1,2,\ldots$ and therefore $Az^{n}=e^{i\theta_{n}}z^{n}$.
\end{remark}

Ko and Lee considered the family of conjugations $C_{\lambda}$ defined in \eqref{eq1} and proved the following:

\begin{theorem}\cite[Theorem 2.4]{Ko}
If $\varphi(z)=\sum_{n=-\infty}^{\infty}\widehat{\varphi}(n)z^{n}\in L^{\infty}$, then $T_{\varphi}$ is $C_{\lambda}$-symmetric if and only if $\widehat{\varphi}(n)\lambda^{n}=\widehat{\varphi}(-n)$ for all $n\in\mathbb{Z}$.
\end{theorem}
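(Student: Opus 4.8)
The plan is to characterize $C_{\lambda}$-symmetry of $T_{\varphi}$ by directly unwinding the defining identity $C_{\lambda}T_{\varphi} = T_{\varphi}^{*}C_{\lambda}$ on the orthonormal basis $\{z^{k} : k \geq 0\}$ of $H^{2}$, or equivalently by testing the sesquilinear identity $\langle C_{\lambda}T_{\varphi}z^{j}, C_{\lambda}z^{k}\rangle = \langle z^{k}, T_{\varphi}^{*}C_{\lambda}\,(\text{something})\rangle$ — but the cleanest route is to use the antilinear intertwining relation together with the known action of $C_{\lambda}$ from \eqref{eq1}. First I would recall that $T_{\varphi}^{*} = T_{\overline{\varphi}}$ and that, for $\varphi = \sum_{n} \widehat{\varphi}(n) z^{n} \in L^{\infty}$, the matrix entries of $T_{\varphi}$ in the basis $\{z^{k}\}$ are $\langle T_{\varphi} z^{j}, z^{k}\rangle = \widehat{\varphi}(k-j)$, the usual Toeplitz (constant-along-diagonals) structure. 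I would also record that $C_{\lambda} z^{n} = \overline{\lambda^{n}} z^{n}$ from \eqref{eq1} with $a_{n} = 1$, i.e.\ each basis vector is an eigenvector-like element for the antilinear map $C_{\lambda}$.

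The key computation is to express both sides of $C_{\lambda}T_{\varphi} = T_{\varphi}^{*}C_{\lambda}$ as antilinear operators and compare matrix coefficients. Since $C_{\lambda}$ is isometric and involutive, $T_{\varphi}$ is $C_{\lambda}$-symmetric iff $C_{\lambda}T_{\varphi}C_{\lambda} = T_{\varphi}^{*}$, and the left side is a genuine linear operator; so it suffices to show $\langle C_{\lambda}T_{\varphi}C_{\lambda} z^{j}, z^{k}\rangle = \langle T_{\overline{\varphi}} z^{j}, z^{k}\rangle = \widehat{\overline{\varphi}}(k-j) = \overline{\widehat{\varphi}(j-k)}$ for all $j,k \geq 0$. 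Using $C_{\lambda}z^{j} = \overline{\lambda^{j}}z^{j}$, antilinearity of $C_{\lambda}$, and the conjugation identity $\langle C_{\lambda} u, v\rangle = \langle C_{\lambda} v, u\rangle$, I would compute
\[
\langle C_{\lambda}T_{\varphi}C_{\lambda} z^{j}, z^{k}\rangle = \langle C_{\lambda} z^{k}, T_{\varphi} C_{\lambda} z^{j}\rangle = \langle \overline{\lambda^{k}} z^{k}, \lambda^{j} T_{\varphi} z^{j}\rangle = \lambda^{k}\overline{\lambda^{j}}\,\overline{\langle T_{\varphi} z^{j}, z^{k}\rangle} = \lambda^{k}\overline{\lambda^{j}}\,\overline{\widehat{\varphi}(k-j)}.
\]
Setting this equal to $\overline{\widehat{\varphi}(j-k)}$ and writing $n = j-k$ (which ranges over all of $\mathbb{Z}$ as $j,k$ vary over $\mathbb{Z}_{\geq 0}$), the condition becomes $\lambda^{-n}\,\overline{\widehat{\varphi}(-n)} = \overline{\widehat{\varphi}(n)}$, i.e.\ $\overline{\widehat{\varphi}(-n)} = \lambda^{n}\,\overline{\widehat{\varphi}(n)}$; conjugating and relabeling gives $\widehat{\varphi}(-n) = \overline{\lambda^{n}}\,\widehat{\varphi}(n)$. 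Since $\lambda \in \mathbb{T}$ we have $\overline{\lambda^{n}} = \lambda^{-n}$, so this is equivalent to $\widehat{\varphi}(n)\lambda^{n} = \widehat{\varphi}(-n)$ for all $n \in \mathbb{Z}$, which is exactly the asserted criterion (after replacing $n$ by $-n$, which is harmless since the condition ranges over all integers).

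The main obstacle — really a bookkeeping point rather than a deep one — is handling the indices carefully: the matrix coefficients are only indexed by $j,k \geq 0$, so one must check that the resulting constraint on $\widehat{\varphi}(n) - \widehat{\varphi}(-n)$ genuinely covers every $n \in \mathbb{Z}$, which it does because for any $n$ one can choose $j,k \geq 0$ with $j - k = n$. A secondary subtlety is justifying that checking equality of the two bounded operators on the orthonormal basis (equivalently, equality of all matrix coefficients $\langle \cdot\, z^{j}, z^{k}\rangle$) suffices — this is standard, using density of finite linear combinations of the $z^{j}$ and boundedness of $T_{\varphi}$, $T_{\varphi}^{*}$, and $C_{\lambda}$. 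Once these routine density/indexing remarks are in place, the chain of equalities above is the whole proof, and the conjugation axiom $\langle C_{\lambda} u, v\rangle = \langle C_{\lambda} v, u\rangle$ (which is axiom (a) rewritten) together with the explicit formula \eqref{eq1} does all the work.
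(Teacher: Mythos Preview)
The paper does not actually prove this theorem; it is quoted from \cite{Ko} without argument. Your overall strategy---reducing $C_{\lambda}T_{\varphi}C_{\lambda}=T_{\varphi}^{*}$ to a comparison of matrix entries in the basis $\{z^{k}\}$ using the conjugation identity $\langle C_{\lambda}u,v\rangle=\langle C_{\lambda}v,u\rangle$---is the standard one and is sound in principle.

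There is, however, a genuine scalar-bookkeeping error in your displayed chain that flips the exponent of $\lambda$ and leads you to the wrong condition. Since $T_{\varphi}$ is linear, $T_{\varphi}C_{\lambda}z^{j}=T_{\varphi}(\overline{\lambda^{j}}z^{j})=\overline{\lambda^{j}}\,T_{\varphi}z^{j}$, not $\lambda^{j}T_{\varphi}z^{j}$ as you wrote; and with the inner product linear in the first argument, pulling $\overline{\lambda^{k}}$ out of the first slot leaves it as $\overline{\lambda^{k}}$, not $\lambda^{k}$. The correct computation gives
\[
\langle C_{\lambda}T_{\varphi}C_{\lambda}z^{j},z^{k}\rangle
=\lambda^{j}\,\overline{\lambda^{k}}\,\overline{\widehat{\varphi}(k-j)},
\]
so with $n=j-k$ the equality with $\overline{\widehat{\varphi}(j-k)}$ becomes $\lambda^{n}\overline{\widehat{\varphi}(-n)}=\overline{\widehat{\varphi}(n)}$, i.e.\ $\widehat{\varphi}(n)\lambda^{n}=\widehat{\varphi}(-n)$, exactly the statement. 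Your version instead yields $\widehat{\varphi}(-n)=\lambda^{-n}\widehat{\varphi}(n)$, and your closing remark that ``replacing $n$ by $-n$'' converts this into the asserted criterion is false: each of the two conditions $\widehat{\varphi}(n)\lambda^{n}=\widehat{\varphi}(-n)$ and $\widehat{\varphi}(n)\lambda^{-n}=\widehat{\varphi}(-n)$ is separately invariant under $n\mapsto -n$, so the relabeling cannot interchange them. In effect your computation characterizes $C_{\overline{\lambda}}$-symmetry rather than $C_{\lambda}$-symmetry. Once the two scalar extractions are corrected, the argument goes through verbatim.
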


Analogous arguments made by Ko and Lee lead us to the following.

\begin{theorem}
Let $\zeta=\left\{\zeta_{1},\zeta_{2},\ldots\right\}$ a sequence, where $\zeta_{j}\in\mathbb{T}$, and $\varphi(z)=\sum_{n=-\infty}^{\infty}\widehat{\varphi}(n)z^{n}\in L^{\infty}$. Then $T_{\varphi}$ is $\mathcal{C}_{\zeta}$-symmetric if, and only if, $\widehat{\varphi}(n)\zeta_{n}^{2n}=\widehat{\varphi}(-n)$ for all $n=0,1,2\ldots$.
\end{theorem}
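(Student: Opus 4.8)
The plan is to mirror the proof of Theorem~2.4 of Ko and Lee \cite{Ko}, adapting it to the diagonal conjugation $\mathcal{C}_{\zeta}$ of \eqref{eq3}. Recall that $T_{\varphi}$ is $\mathcal{C}_{\zeta}$-symmetric exactly when $\mathcal{C}_{\zeta}T_{\varphi}=T_{\varphi}^{*}\mathcal{C}_{\zeta}$, and that $T_{\varphi}^{*}=T_{\overline{\varphi}}$. Both $\mathcal{C}_{\zeta}T_{\varphi}$ and $T_{\overline{\varphi}}\mathcal{C}_{\zeta}$ are antilinear, hence are determined by their values on the orthonormal basis $\{z^{k}\}_{k\ge 0}$; so the first step is to replace the operator identity by the family of vector identities $\mathcal{C}_{\zeta}T_{\varphi}z^{k}=T_{\overline{\varphi}}\mathcal{C}_{\zeta}z^{k}$, $k=0,1,2,\ldots$.

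Next I would evaluate both sides. From $T_{\varphi}z^{k}=P(\varphi z^{k})=\sum_{m\ge 0}\widehat{\varphi}(m-k)z^{m}$ and the definition \eqref{eq3}, the left-hand side equals $\sum_{m\ge 0}\overline{\widehat{\varphi}(m-k)}\,\overline{\zeta_{m}^{2m}}\,z^{m}$. For the right-hand side, $\mathcal{C}_{\zeta}z^{k}=\overline{\zeta_{k}^{2k}}\,z^{k}$, and since $\widehat{\overline{\varphi}}(n)=\overline{\widehat{\varphi}(-n)}$ on $\mathbb{T}$ we get $T_{\overline{\varphi}}\mathcal{C}_{\zeta}z^{k}=\overline{\zeta_{k}^{2k}}\sum_{m\ge 0}\overline{\widehat{\varphi}(k-m)}\,z^{m}$. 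Matching the coefficient of $z^{m}$ and taking complex conjugates, $\mathcal{C}_{\zeta}$-symmetry of $T_{\varphi}$ becomes equivalent to
\[
\widehat{\varphi}(m-k)\,\zeta_{m}^{2m}=\zeta_{k}^{2k}\,\widehat{\varphi}(k-m)\qquad\text{for all }m,k\ge 0 .
\]
Note that this relation is symmetric under $(m,k)\mapsto(k,m)$, so it suffices to treat $m\ge k$.

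Necessity of the asserted condition drops out by setting $k=0$ (so that $\zeta_{0}^{0}=1$), which yields $\widehat{\varphi}(m)\,\zeta_{m}^{2m}=\widehat{\varphi}(-m)$ for every $m\ge 0$. The real work is the converse: one must recover the full two-parameter family above from the single-parameter relations $\widehat{\varphi}(n)\,\zeta_{n}^{2n}=\widehat{\varphi}(-n)$, and this reduction is the step I expect to be the main obstacle, since it is where the shift structure of $T_{\varphi}$ collides with the purely diagonal action of $\mathcal{C}_{\zeta}$. Off the support of $\widehat{\varphi}$ there is nothing to prove, because the hypothesis forces $\widehat{\varphi}(n)=0\iff\widehat{\varphi}(-n)=0$; but on $\{n:\widehat{\varphi}(n)\ne 0\}$, writing $n=m-k\ge 0$ and substituting $\widehat{\varphi}(-n)=\widehat{\varphi}(n)\zeta_{n}^{2n}$ reduces the target identity to the compatibility relation $\zeta_{m}^{2m}=\zeta_{k}^{2k}\,\zeta_{m-k}^{2(m-k)}$. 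In the Ko--Lee situation $\zeta_{j}=\overline{\lambda}^{\,1/(2j)}$ one has $\zeta_{n}^{2n}=\overline{\lambda}^{\,n}$, which is multiplicative, so this is automatic; the delicate point here is to carry out the analogous verification for a general sequence $\zeta$ — isolating and dispatching exactly this bookkeeping along the support of $\widehat{\varphi}$ is what would complete the argument.
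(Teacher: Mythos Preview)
The paper gives no proof of this theorem; it simply asserts that ``analogous arguments made by Ko and Lee lead us to the following,'' so there is nothing to compare against beyond the Ko--Lee template you have already followed. Your reduction is carried out correctly: the identity $\mathcal{C}_{\zeta}T_{\varphi}=T_{\overline{\varphi}}\mathcal{C}_{\zeta}$ is equivalent to
\[
\widehat{\varphi}(m-k)\,\zeta_{m}^{2m}=\zeta_{k}^{2k}\,\widehat{\varphi}(k-m)\qquad(m,k\ge 0),
\]
and specialising to $k=0$ gives the necessity of $\widehat{\varphi}(n)\zeta_{n}^{2n}=\widehat{\varphi}(-n)$.

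However, the ``delicate point'' you flag for the converse is not mere bookkeeping: it is a genuine obstruction, and the sufficiency direction is \emph{false} for general $\zeta$. Your own reduction shows that on the support of $\widehat{\varphi}$ one would need the multiplicativity $\zeta_{m}^{2m}=\zeta_{k}^{2k}\,\zeta_{m-k}^{2(m-k)}$, which an arbitrary sequence in $\mathbb{T}$ need not satisfy. Concretely, take $\zeta_{1}=1$, $\zeta_{2}=e^{i\pi/8}$ (so $\zeta_{2}^{4}=i$), with the remaining $\zeta_{j}$ arbitrary, and let $\varphi(z)=z+\bar z$. Then $\widehat{\varphi}(\pm 1)=1$ and all other coefficients vanish, so $\widehat{\varphi}(n)\zeta_{n}^{2n}=\widehat{\varphi}(-n)$ holds for every $n\ge 0$. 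Yet $\varphi$ is real, so $T_{\varphi}^{*}=T_{\varphi}$, and
\[
\mathcal{C}_{\zeta}T_{\varphi}z=\mathcal{C}_{\zeta}(z^{2}+1)=-i\,z^{2}+1,
\qquad
T_{\varphi}\mathcal{C}_{\zeta}z=T_{\varphi}z=z^{2}+1,
\]
which disagree. Thus $T_{\varphi}$ is not $\mathcal{C}_{\zeta}$-symmetric, and the ``if'' direction fails. In short, your computation is sound, but the step you left open cannot be closed: the theorem as stated (and hence the paper's appeal to ``analogous arguments'') is incorrect unless one imposes on $\zeta$ precisely the multiplicative constraint $\zeta_{m}^{2m}=\zeta_{k}^{2k}\zeta_{m-k}^{2(m-k)}$ that collapses $\mathcal{C}_{\zeta}$ back to a $C_{\lambda}$.
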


In view of the previous theorem, it is natural to ask:

\begin{problem}
Under what conditions a Toeplitz operator $T_{\varphi}$, with $\varphi\in L^{\infty}$, is $C$-symmetric where $C$ is given by \eqref{eq4}?
\end{problem}

\end{document}